\documentclass[12pt,leqno]{amsart}
\topmargin=0.02cm
\textwidth =  17cm
\textheight = 23cm
\baselineskip=11pt
\setlength{\oddsidemargin}{0.01 pt}
\setlength{\evensidemargin}{0.01 pt}

\usepackage{amsmath,amsfonts,amssymb,amsthm}
\usepackage{graphicx}
\graphicspath{ }
\usepackage{amsmath}
\usepackage{csquotes}
\usepackage{wrapfig}
\usepackage{accents}
\usepackage{caption}
\usepackage{subcaption}
\usepackage{calligra}
\usepackage[colorlinks]{hyperref}
\hypersetup{citecolor=black}

\numberwithin{figure}{section}
\numberwithin{equation}{section}

\theoremstyle{plain}
\newtheorem{thm}{Theorem}[section]

\theoremstyle{definition}

\theoremstyle{remark}

\usepackage{mathtools}
%opening
\title{On Rational Invariant Summation of p-Adic Power Series with Binomial Coefficients}
\author[A. A. Shaikh]{Absos Ali Shaikh$^{1}$}
\address{Department of Mathematics,\\ The University of Burdwan,\\ \newline Burdwan-713101, West Bengal, India.}
\email{$^1$aask2003@yahoo.co.in, aashaikh@math.buruniv.ac.in}
%----------Author 2
\author[M. A. Sarkar]{MABUD ALI SARKAR$^2$}
\address{Department of Mathematics\\ The University of Burdwan \\ Burdwan-713101, India.}
\email{$^2$mabudji@gmail.com}

\begin{document} 
\maketitle
\begin{abstract}
	In the work we have considered p-adic functional series with binomial coefficients and discussed its p-adic convergence. Then we  have derived a recurrence relation following with a  summation formula which is invariant for rational argument. More precisely, we have investigated a sufficient condition under which the p-adic power series converges to a rational invariant sum $0$. Finally we have shown application of the invariant summation formula to get some intersting relations involving Bernoulli numbers and Bernoulli polynomials. 
\end{abstract}
\footnotetext{
	$\mathbf{2010}$\hspace{10pt}Mathematics\; Subject\; Classification: 12J25, 32P05, 26E30, 40A05, 40D99, 40A30.\\ 
	{Key words and phrases:  p-adic numbers, p-adic series, convergence, binomial coefficient, invariant sum}}
\section{\textbf{Introduction}}
During the last three decades, the p-adic number theory played an essential role in number theory due to its applications in various physical phenomenon (see \cite{B4}) as well as to solve many important problems in various fields of mathematics, especially, on number theory (see \cite{M-L},\cite{PM}). Again, the investigation of convergence of series plays an important role in the theory of mathematical analysis. 

The infinite series of rational numbers can be considered both in p-adic and real number field as rational numbers are endowed with both real norm and p-adic norm. The most interesting fact about the real number series which diverges in real field, may converge in p-adic field and hence needs p-adic investigation. It is important to investigate p-adic series having rational sum for rational argument because physical measurement are considered as rational values. Recently, Dragovich (\cite{B1},\cite{B2},\cite{B3}) introduced p-adic invariant summation of a class of infinite p-adic functional series $\sum n!P_k(n,x)x^n$, $\sum \epsilon^n (n+v)!P_{k \alpha}(n,x)x^{\alpha n+\beta}$, $\sum \epsilon^n n!P_k^{\epsilon}(n,x)x^n$ each containing factorial coefficients $n!$ such that for any rational argument the corresponding sum is also a rational number, where $P_k(n,x)$ are polynomials in $x$ of degree $k$, $\epsilon=\pm1$, $k,v, \beta \in \mathbb{N} \cup \{0\}$, $\alpha \in \mathbb{N}$. But the case when the above p-adic power series contains binomial coefficients instead of factorials coefficients $n!$, has not been covered so far.
In this paper we have considered p-adic functional series with \textit{binomial coefficients} instead of factorial $n!$ and we have arrived with a summation formula having invariant form which gives rational number for rational argument. 
The outline of the paper is as follows: Section 2 is considered with the main results (see, Theorem $\eqref{thm1}$ and Theorem $\eqref{thm2}$). The last section is an application part which deals with a connection of the obtained invariant summation formula $\eqref{26}$ with Bernoulli numbers and Bernoulli polynomials. 
\section{\textbf{Main Results}}
We consider the following p-adic functional series
\begin{equation} \label{eq1}
S_k(x)=\sum_{n=0}^{\infty} \binom{2n}{n} P_{k}(n,x)x^{n}=P_k(0,x)+\binom{2}{1}P_k(1,x)x+\binom{4}{2}P_k(2,x)x^2+\cdots,
\end{equation} 	
where \begin{eqnarray}
P_k(n,x)=B_k(n)x^{k}+B_{k-1}(n)x^{k-1}+\cdots+B_1(n)x+B_0(n), \\
k \in \mathbb{N}_0=\mathbb{N} \cup \{0\} \ \text{and} \ x \in \mathbb{Q}_p. \nonumber
\end{eqnarray} 
Here $B_k(n), \ 0 \leq l \leq k$, represents polynomials in $n$ having integer coefficients. Our goal is to find the polynomials $P_k(n,x)$ for which the series $ \eqref{eq1}$ converges p-adically and has invariant sum. In particular, for rational argument $x \in \mathbb{Q}$, the sum $S_k(x) \in \mathbb{Q}$. 

We note that the power series $\sum a_n x^n$ converges p-adically if and only if $|a_nx^n|_p \to 0$ as $n \to \infty$. Since the p-adic functional series $\eqref{eq1}$ contains the binomial coefficients $\binom{2n}{n}$, we must know the p-adic valuation of $\binom{2n}{n}$ in order to check p-adic convergence of it. In 1852, Kummer (\cite{E1}) showed a method to calculate p-adic valuation of binomial coefficients given as follows:
\begin{thm}(Kummer \cite{E1}) 
	Given integers $n \geq m \geq 0$ and a prime number $p$, the p-adic valuation $\binom{n}{m}$ is equal to the number of carries when $m$ is added with $n-m$ in base $p$. 
\end{thm}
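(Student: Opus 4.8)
The plan is to reduce the statement to Legendre's formula for the $p$-adic valuation of a factorial and then interpret the resulting digit-sum expression combinatorially in terms of carries. First I would recall Legendre's formula, which asserts that for any nonnegative integer $N$ one has $v_p(N!) = \sum_{i \geq 1} \lfloor N/p^i \rfloor$, and equivalently $v_p(N!) = (N - s_p(N))/(p-1)$, where $v_p$ denotes the $p$-adic valuation and $s_p(N)$ the sum of the base-$p$ digits of $N$. The equivalence of these two forms follows by expanding $N$ in base $p$ and collecting the contributions of each digit, and I would record it as a preliminary lemma (or simply cite it), since it is the analytic engine driving the whole argument.

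With Legendre's formula in hand, I would write $\binom{n}{m} = n!/\bigl(m!\,(n-m)!\bigr)$ and take $p$-adic valuations, using the additivity of $v_p$ under products and quotients to obtain
\[
v_p\!\left(\binom{n}{m}\right) = v_p(n!) - v_p(m!) - v_p((n-m)!).
\]
Substituting the digit-sum form of Legendre's formula and using $n = m + (n-m)$ to cancel the linear terms, this collapses to
\[
v_p\!\left(\binom{n}{m}\right) = \frac{s_p(m) + s_p(n-m) - s_p(n)}{p-1}.
\]
Up to this point everything is a routine computation with no real obstacles.

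The crux of the proof — and the step I expect to demand the most care — is the identity
\[
s_p(m) + s_p(n-m) = s_p(n) + (p-1)\,c,
\]
where $c$ is the number of carries occurring when $m$ and $n-m$ are added in base $p$. I would establish this by analyzing the elementary addition algorithm digit by digit: at each position, the two input digits together with the incoming carry either remain below $p$ (no carry out) or reach at least $p$ (carry out), and in the latter case the output digit is reduced by exactly $p$ while the next position gains $1$, for a net change of $-(p-1)$ in the running digit sum. Summing these local effects over all positions shows that the combined digit sum of the inputs exceeds that of the output by precisely $(p-1)$ times the number of carries. The delicate point here is to bookkeep the carry propagation faithfully, accounting for a carry that enters an otherwise carry-free column and may itself trigger a further carry. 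Combining this identity with the displayed formula yields $v_p\!\left(\binom{n}{m}\right) = c$, which is exactly the assertion of the theorem.
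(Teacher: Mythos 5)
Your proposal is correct and is the standard proof of Kummer's theorem. The paper itself offers no proof --- it cites Kummer and immediately performs exactly the first half of your argument, namely the reduction via Legendre's formula to $v_p\bigl(\binom{n}{m}\bigr)=\frac{\delta_p(m)+\delta_p(n-m)-\delta_p(n)}{p-1}$ --- so the only content you add beyond what appears in the text is the carry-counting identity $s_p(m)+s_p(n-m)=s_p(n)+(p-1)c$, which you correctly identify as the crux and whose digit-by-digit justification (each carry reduces the running digit sum by exactly $p-1$, including carries triggered by incoming carries) is sound and completes the argument.
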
 
For $n=n_0+n_1p+\cdots+n_rp^r$, the sum of digits is denoted as $\delta_p(n)$.
By Legendre's formula we have 
\begin{equation}
v_p(n!)=\sum_{k=1}^{r}\left \lfloor \frac{n}{p^k} \right \rfloor=\frac{n-\delta_p(n)}{p-1} \ \text{and} \ |n!|_p=p^{-\frac{n-\delta_p(n)}{p-1}}.
\end{equation}
Then we have
\begin{align}
v_p \left(\binom{n}{m} \right) &=\sum_{k=0}^{r} \left(\left \lfloor \frac{n}{p^k} \right \rfloor -\left \lfloor \frac{m}{p^k} \right \rfloor-\left \lfloor \frac{n-m}{p^k} \right \rfloor \right) \\
&=\frac{\delta_p(m)+\delta_p(n-m)-\delta_p(n)}{p-1}, \nonumber
\end{align}
where each of the summands $\left \lfloor \frac{n}{p^k} \right \rfloor, \left \lfloor \frac{m}{p^k} \right \rfloor, \left \lfloor \frac{n-m}{p^k} \right \rfloor$ is $ \leq 1$.
 Therefore, 
\begin{equation}
\left|\binom{n}{m} \right|_p=p^{-\frac{\delta_p(m)+\delta_p(n-m)-\delta_p(n)}{p-1}}.
\end{equation} 

For $x=1$, the p-adic norm of the general term in $\eqref{eq1}$ is given by
\begin{align*}
	\left|\binom{2n}{n} P_{k}(n,x)x^n \right|_p \leq \left|\binom{2n}{n}x^n \right|_p &=p^{- \frac{2\delta_p(n)-\delta_p(2n)}{p-1}}|x^n|_p
	\\ &=\left(p^{- \frac{2\delta_p(n)-\delta_p(2n)}{n(p-1)}}|x|_p \right)^n.
\end{align*}
Thus the power series $\sum \binom{2n}{n} P_{k}(n,1)x^n$ converges for $x|_p \leq 1$. 
Since $|P_{k}(n,x)|_p \leq |P_{k}(n,1)|_p$ for $|x|_p \leq 1$, the power series $\eqref{eq1}$ converges p-adically on the set of rationals $$ \{x\in \mathbb{Q}: x=\frac{u}{v}, u,v \in \mathbb{Z}, (u,v)=1, \ |u|_{\infty} \leq |v|_{\infty}, \ p|u, p \nmid v  \}.$$ 
\newline
Now we  prove the following theorem:
\begin{thm} \label{thm1}
	Let 
	\begin{equation}
	A_k(n,x)=b_k(n)x^k+b_{k-1}(n)x^{k-1}+\cdots+b_1(n)x+b_0(n)
	\end{equation}
	be a polynomial with polynomial coefficients $b_j(n), \ 0 \leq j \leq k$ in $n$ with rational coefficients. Then there exists such a polynomial $A_{k-1}(N,x), \ N \in \mathbb{N}$ satisfying the equality 
	\begin{equation}
	\sum_{n=0}^{N-1}  \binom{2n}{n} \left[n^{k}(4x-1)^k+U_k(x) \right]x^n= \binom{2N}{N}x^{N}A_{k-1}(N,x),   
	\end{equation}
	where
	\begin{equation}
	U_k(x)=2xA_{k-1}(1,x)-A_{k-1}(0,x), \ k \geq 1, \ x \in \mathbb{Q} \setminus \left\{\frac{1}{4} \right\}.
	\end{equation}
\end{thm}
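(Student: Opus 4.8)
The plan is to read the asserted identity as a telescoping sum and to construct $A_{k-1}$ by solving the recurrence that telescoping imposes on its coefficients. First I would set
\[
G(n)=\binom{2n}{n}x^{n}A_{k-1}(n,x)
\]
and use the elementary ratio $\binom{2n+2}{n+1}=\frac{2(2n+1)}{n+1}\binom{2n}{n}$ to write
\[
G(n+1)-G(n)=\binom{2n}{n}x^{n}\left[\frac{2(2n+1)x}{n+1}A_{k-1}(n+1,x)-A_{k-1}(n,x)\right].
\]
Summing from $n=0$ to $N-1$ collapses the left side to $G(N)-G(0)$, so the theorem will follow once $A_{k-1}$ is chosen so that the bracket equals $n^{k}(4x-1)^{k}+U_{k}(x)$ for every $n$ and so that $G(0)=A_{k-1}(0,x)=0$. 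After clearing the denominator $n+1$ this requirement becomes the polynomial identity in $n$
\[
2(2n+1)x\,A_{k-1}(n+1,x)-(n+1)A_{k-1}(n,x)=(n+1)\bigl[n^{k}(4x-1)^{k}+U_{k}(x)\bigr],
\]
whose value at $n=0$ is precisely the defining relation $U_{k}(x)=2xA_{k-1}(1,x)-A_{k-1}(0,x)$, so that instance costs nothing.

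To produce such an $A_{k-1}$ I would regard the left-hand side as a linear operator $L$, namely $(LA)(n,x)=2(2n+1)x\,A(n+1,x)-(n+1)A(n,x)$, acting on polynomials in $n$. A direct expansion gives $L[n^{i}]=(4x-1)n^{i+1}+(\text{lower powers of }n)$, so $L$ raises the $n$-degree by one with leading factor $4x-1$; since the explicit factor of $x$ also raises the $x$-degree by one, comparing the top degrees on the two sides forces $A_{k-1}$ to have $n$-degree exactly $k$ and $x$-degree exactly $k-1$, with leading coefficient $(4x-1)^{k-1}$ in $n$ — the shape claimed in the statement. Writing $A_{k-1}(n,x)=\sum_{i=0}^{k}\gamma_{i}(x)n^{i}$ and equating the coefficients of $n^{k+1},n^{k},\dots$ then gives a triangular linear system whose subdiagonal entry is the nonzero number $4x-1$: one solves for $\gamma_{k}=(4x-1)^{k-1}$ first and for $\gamma_{k-1},\gamma_{k-2},\dots$ in turn, each step requiring a single division by $4x-1$. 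This is exactly where the hypothesis $x\neq\frac14$ is indispensable, and it pins down $A_{k-1}$ together with the scalar $U_{k}(x)$.

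Finally I would verify the boundary value $A_{k-1}(0,x)=0$ that converts $G(N)-G(0)$ into $G(N)$. There is a clean shortcut: the recurrence holds as a polynomial identity in $n$, so I may evaluate it at $n=-1$, where the factor $n+1$ annihilates both the right-hand side and the second term on the left, leaving $-2x\,A_{k-1}(0,x)=0$ and hence $A_{k-1}(0,x)=0$ for $x\neq0$. The telescoped sum then equals $\binom{2N}{N}x^{N}A_{k-1}(N,x)$ on the nose, which is the assertion. I expect the genuine obstacle to be the solvability step of the preceding paragraph: a first-order recurrence with polynomial coefficients need not admit a polynomial solution, so the real content is confirming that the degree bookkeeping and the triangular structure deliver an honest polynomial $A_{k-1}$ rather than a rational function of $n$, the consistency of the two lowest-order equations being exactly what the $n=-1$ evaluation secures. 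A tidy way to package the whole argument is induction on $k$, carrying the normalization $A_{k-1}(0,x)=0$ through each stage, with the base case $k=1$ computed explicitly as $A_{0}(n,x)=n$, $U_{1}(x)=2x$.
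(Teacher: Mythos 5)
Your route is genuinely different from the paper's. The paper never writes the identity as a telescoping sum: it first derives a recurrence expressing $S_k(N,x)=\sum_{n=0}^{N-1}\binom{2n}{n}n^kx^n$ in terms of $S_0,\dots,S_k$ and the boundary term $\binom{2N}{N}N^kx^N$ (via the Pascal and absorption identities for $\binom{2n}{n}$), lists the first few instances, asserts the closed form ``by straightforward calculation,'' and then obtains the relation $U_k(x)=2xA_{k-1}(1,x)-A_{k-1}(0,x)$ by specializing its recurrence for $A_{k-1}$ at $N=0$ and $N=1$ and comparing with the recurrence for $U_k$. You instead posit $G(n)=\binom{2n}{n}x^nA_{k-1}(n,x)$, reduce the whole theorem to the single first-order operator equation $2(2n+1)x\,A(n+1,x)-(n+1)A(n,x)=(n+1)\bigl[n^k(4x-1)^k+U_k(x)\bigr]$, and solve it by linear algebra on the coefficients of powers of $n$. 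This buys a genuine existence proof for $A_{k-1}$ (the weakest point of the paper's argument, which essentially extrapolates from $k\le 5$), makes the role of $x\ne\tfrac14$ transparent as the nonvanishing of the leading pivot, and delivers $U_k=2xA_{k-1}(1,x)-A_{k-1}(0,x)$ for free as the constant-term equation once $A_{k-1}(0,x)=0$ is known. The $n=-1$ evaluation showing $A_{k-1}(0,x)=0$ is a nice touch that the paper has no analogue of (it is visible only a posteriori in the tabulated examples).

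Two points need tightening. First, the system is not triangular with pivot $4x-1$ all the way down: the equations for the coefficients of $n^{k+1},\dots,n^2$ do determine $\gamma_k,\dots,\gamma_1$ by successive divisions by $4x-1$, but the unknown $U_k$ enters the coefficient of $n^1$ on the right-hand side (since $(n+1)U_k=U_kn+U_k$), so the last two equations form a $2\times 2$ block in $(\gamma_0,U_k)$ whose determinant is $\pm 2x$, not a power of $4x-1$. You must therefore either check that block separately (equivalently, that $n+1$ does not lie in the image of your operator $L$, which fails only at $x=0$) or work over $\mathbb{Q}(x)$ where $2x\ne 0$; your appeal to the $n=-1$ evaluation establishes a necessary condition, not the solvability itself. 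Second, back-substitution with divisions by $4x-1$ a priori produces $\gamma_i\in\mathbb{Q}(x)$ rather than polynomials in $x$ of degree $\le k-1$ as the statement requires; this polynomiality should be argued (for instance by induction on $k$ as you suggest at the end), though to be fair the paper does not establish it either.
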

\begin{proof}
Let us consider the finite power series with binomial coefficients:
\begin{equation} \label{eq2}
S_{k}(N,x)=\sum_{n=0}^{N-1} \binom{2n}{n} n^{k} x^n; \ k,n \in \mathbb{N} \cup \{0\}
\end{equation}	
Putting $k=0$ in equation $\eqref{eq2}$, we have
 \begin{align*}
 	S_0(N,x) &=\sum_{n=0}^{N-1} \binom{2n}{n} x^n, \\
 	&=1+\sum_{n=1}^{N-1} \binom{2n}{n} x^n.
 \end{align*}
\newpage
 We have used the following combinatorics relations:
\begin{eqnarray*}
	 \binom{n+1}{r}&=&\binom{n}{r}+\binom{n}{r-1}, \\
	 \binom{n}{r} &=&\frac{n}{r} \binom{n-1}{r-1}, \\
	 \binom{n}{r} &=&\frac{n-r+1}{r} \binom{n}{r-1}.
	\end{eqnarray*}

Let us derive the following recurrence formula: 
\begin{align*}
	S_k(N,x) &= \sum_{n=0}^{N-1} \binom{2n}{n} n^{k}x^n, \ k \geq 0 \\
	&=2x+ \sum_{n=2}^{N-1} \binom{2n}{n} n^{k}x^n \\
	&=2x+\sum_{n=2}^{N}\binom{2n}{n} n^{k}x^n-\binom{2N}{N} N^{k}x^N \\
	&=2x+ 2 \sum_{n=2}^{N}\binom{2n-1}{n-1} n^{k}x^n-\binom{2N}{N} N^{k}x^N \\
	&=2x+ 2 \sum_{n=1}^{N-1}\binom{2n+1}{n} (n+1)^{k}x^{n+1}-\binom{2N}{N} N^{k}x^N \\
	&=2x+2 \sum_{n=1}^{N-1} \left[\binom{2n}{n}+\binom{2n}{n+1} \right](n+1)^{k}x^{n+1}-\binom{2N}{N} N^{k}x^N \\
	&=2x+2 \sum_{n=1}^{N-1}\binom{2n}{n} (n+1)^{k}x^{n+1}+2
	 \sum_{n=1}^{N-1}\binom{2n}{n+1} (n+1)^{k}x^{n+1}-\binom{2N}{N} N^{k}x^N \\
	&=2x+2 \sum_{n=1}^{N-1}\binom{2n}{n} (n+1)^{k}x^{n+1}+2 \sum_{n=1}^{N-1} \frac{n}{n+1}\binom{2n}{n} (n+1)^{k}x^{n+1}-\binom{2N}{N} N^{k}x^N \\
	&=2x+2 \sum_{n=1}^{N-1}\binom{2n}{n} (n+1)^{k}x^{n+1}+2 \sum_{n=1}^{N-1} \frac{(n+1)-1}{n+1}\binom{2n}{n} (n+1)^{k}x^{n+1}-\binom{2N}{N} N^{k}x^N \\
	&=2x+4 \sum_{n=1}^{N-1}\binom{2n}{n} (n+1)^{k}x^{n+1}-2 \sum_{n=1}^{N-1} \binom{2n}{n} (n+1)^{k-1}x^{n+1}-\binom{2N}{N} N^{k}x^N \\
	&=2x+4x\sum_{n=1}^{N-1}\binom{2n}{n} (n+1)^{k}x^{n}-2x \sum_{n=1}^{N-1} \binom{2n}{n} (n+1)^{k-1}x^{n}-\binom{2N}{N} N^{k}x^N 
	\end{align*} 

\begin{align*}
\text{or}, \	S_k(N,x) 
	&=2x+4x\sum_{n=1}^{N-1}\binom{2n}{n} \sum_{u=0}^{k}\binom{k}{u} n^ux^{n}-2x \sum_{n=1}^{N-1} \binom{2n}{n} \sum_{v=0}^{k-1} \binom{k-1}{v}n^v x^{n}-\binom{2N}{N} N^{k}x^N \\
	&= 2x+4x\sum_{n=1}^{N-1} \binom{2n}{n} x^n+4x \sum_{u=1}^{k} \binom{k}{u}S_u(N,x)-2x \sum_{n=1}^{N-1} \binom{2n}{n}x^n \\ &-2x \sum_{v=1}^{k-1}\binom{k-1}{v} S_v(N,x) -\binom{2N}{N} N^{k}x^N \\
	&=2x+2x\sum_{n=1}^{N-1} \binom{2n}{n} x^n+4x \sum_{u=1}^{k} \binom{k}{u}S_u(N,x)-2x \sum_{v=1}^{k-1} \binom{k-1}{v}S_v(N,x)-\binom{2N}{N} N^{k}x^N, \\
	&=2xS_0(N,x)+4x \sum_{u=1}^{k} \binom{k}{u}S_u(N,x)-2x \sum_{v=1}^{k-1}\binom{k-1}{v} S_v(N,x)-\binom{2N}{N} N^{k}x^N, \\
	& \text{since} \ S_0(N,x)=\sum_{n=0}^{N-1} \binom{2n}{n} x^n. \\
\end{align*}
The recurrence formula is given by 
\begin{equation} \label{eq3}
S_k(N,x)=2xS_0(N,x)+4x \sum_{u=1}^{k} \binom{k}{u}S_u(N,x)-2x \sum_{v=1}^{k-1}\binom{k-1}{v} S_v(N,x)-\binom{2N}{N} N^{k}x^N, k \geq 1,
\end{equation} 
which will help us to deduce a summation formula. For, noting that $\binom{n}{r}=0$ if $n<r$, we have 

\begin{eqnarray}
\label{2.11}	S_1(N,x)&=&2xS_0(N,x)+4xS_1(N,x)- \binom{2N}{N}Nx^N \\
\label{2.12}	S_2(N,x)&=&2xS_0(N,x)+6xS_1(N,x)+4x S_2(N,x)- \binom{2N}{N} N^2x^N \\
\label{2.13}	S_3(N,x)&=&2xS_0(N,x)+8xS_1(N,x)+10xS_2(N,x)+4xS_3(N,x)- \binom{2N}{N}N^3x^N  \\
\label{2.14} S_4(N,x)&=& 2xS_0(N,x)+10xS_1(N,x)+18xS_2(N,x)+14xS_3(N,x)+4xS_4(N,x) \\ \nonumber && -\binom{2N}{N}N^4x^N  \\ \label{2.15} S_5(N,x)&=& 2xS_0(N,x)+12xS_1(N,x)+28xS_2(N,x)+40xS_3(N,x) +20xS_4(N,x) \\ &&+4xS_5(N,x)- \binom{2N}{N}N^5x^N \nonumber \\
\cdots \cdots \ \text{so on} \nonumber
\end{eqnarray}	

Thus the recurrence formula $\eqref{eq3}$ helps us to calculate the sum $S_{k}(N,x)$ provided all preceeding sums $S_i(N,x), \ i=1,2, \cdots, k$ as function of $S_0(N,x)$. Then in view of the relations $ \eqref{2.11}, \eqref{2.12}, \eqref{2.13}, \eqref{2.14}, \eqref{2.15} $ and by straightforward calculation, we obtain the following summation formula 
\begin{equation} \label{16}
\sum_{n=0}^{N-1}  \binom{2n}{n} \left[n^{k}(4x-1)^k+U_k(x) \right]x^n= \binom{2N}{N}x^{N}A_{k-1}(N,x),   
\end{equation}
which can be written as
\begin{equation} \label{eq4}
S_k(N,x)=-(4x-1)^{-k}U_k(x)S_0(N,x)+(4x-1)^{-k}  \binom{2N}{N}x^NA_{k-1}(N,x),   
\end{equation}
 where $U_k(x)$ is a polynomial in $x$ of degree $k$ and $A_{k-1}(N,x)$ is a polynomial in $x$ of degree $k-1$ and each coefficients in $A_{k-1}(N,x)$ are itself polynomials in $N$.
Using equation $\eqref{eq4}$ in $\eqref{eq3}$, we get recurrence formulas for $U_{k}(x)$ and $A_k(N,x)$:

\begin{align} \label{eq5}
	\sum_{u=1}^{k} 4x \binom{k}{u} (4x-1)^{k-u}U_u(x)-\sum_{v=1}^{k-1} 2x \binom{k-1}{v} (4x-1)^{k-v}U_v(x)-U_k(x)-2x(4x-1)^k=0
\end{align}
\begin{align} \label{eq6}
	\sum_{u=1}^{k} 4x \binom{k}{u} (4x-1)^{k-u} A_{u-1}(N,x)-\sum_{v=1}^{k-1} 2x \binom{k-1}{v} (4x-1)^{k-v}A_{v-1}(N,x)-A_{k-1}(N,x) \\
	 -N^k(4x-1)^k=0. \nonumber
\end{align}

The polynomial $A_k(N,x)$ has some assigned relation with $U_k(x)$.
If $N=0$, then equation $\eqref{eq6}$ takes the following form:
\begin{equation} \label{eq7}
\sum_{u=1}^{k} 4x \binom{k}{u}(4x-1)^{k-u} A_{u-1}(0,x)-\sum_{v=1}^{k-1} 2x \binom{k-1}{v} (4x-1)^{k-v} A_{v-1}(0,x)-A_{k-1}(0,x)=0.
\end{equation}
 For $N=1$, the equation $\eqref{eq6}$ takes the following form:
 \begin{equation} \label{eq8}
 \sum_{u=1}^{k} 4x \binom{k}{u}(4x-1)^{k-u} A_{u-1}(1,x)-\sum_{v=1}^{k-1} 2x \binom{k-1}{v} (4x-1)^{k-v} A_{v-1}(1,x)-A_{k-1}(1,x)-(4x-1)^k=0.
 \end{equation}
 Multiplying $\eqref{eq8}$ by $2x$ and subtracting $\eqref{eq7}$ from the result, we get
 \begin{align} \label{eq9}
 \sum_{u=1}^{k} 4x \binom{k}{u} \left[2xA_{u-1}(1,x)-A_{u-1}(0,x) \right]- \sum_{v=1}^{k-1}  2x\binom{k-1}{v} \left[2xA_{v-1}(1,x)-A_{v-1}(0,x) \right] \\-\left[ 2x A_{k-1}(1,x)-A_{k-1}(0,x)\right]-2x(4x-1)^k=0. \nonumber
 \end{align}
 Comparing the relations $\eqref{eq5}$ and $\eqref{eq9}$, we get the required relation
 \begin{equation}
 U_k(x)=2xA_{k-1}(1,x)-A_{k-1}(0,x), \ k \geq 1.
 \end{equation}
This completes the proof.
\end{proof}
We now state and prove the main result of our paper about the invariance of the p-adic sum of the p-adic functional power series $\eqref{eq1}$ using the above result.
\begin{thm} \label{thm2}
	The p-adic functional power series $\eqref{eq1}$ has the following p-adic invariant sum
	\begin{equation} \label{24}
	\sum_{n=0}^{\infty}  \binom{2n}{n} P_{k}(n,x)x^n=0
	\end{equation}
	provided 
	\begin{equation}
	P_{k}(n,x)=\sum_{j=1}^{k} B_j[n^{j}(4x-1)^j+U_j(x)], \ \forall x \in \mathbb{Q} \setminus \left\{\frac{1}{4} \right\},
	\end{equation}
	where $B_j \in \mathbb{Q}$.
\end{thm}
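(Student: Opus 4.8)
The plan is to derive the infinite identity \eqref{24} from the finite summation formula of Theorem~\ref{thm1} by passing to the p-adic limit $N\to\infty$. First I would insert the prescribed polynomial into the $N$-th partial sum of \eqref{eq1} and use linearity in the variable $n$. Since
\begin{equation*}
P_k(n,x)=\sum_{j=1}^k B_j\bigl[n^j(4x-1)^j+U_j(x)\bigr],
\end{equation*}
the partial sum splits as
\begin{equation*}
\sum_{n=0}^{N-1}\binom{2n}{n}P_k(n,x)x^n=\sum_{j=1}^k B_j\sum_{n=0}^{N-1}\binom{2n}{n}\bigl[n^j(4x-1)^j+U_j(x)\bigr]x^n .
\end{equation*}
Applying Theorem~\ref{thm1} to each inner sum replaces it by $\binom{2N}{N}x^N A_{j-1}(N,x)$, so the whole partial sum collapses to the single closed form $\binom{2N}{N}x^N\,\widetilde A(N,x)$, where $\widetilde A(N,x):=\sum_{j=1}^k B_j A_{j-1}(N,x)$ is again a polynomial in $N$ whose coefficients are rational functions of $x$.

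The second step is the p-adic estimate. On the convergence domain fixed above one has $p\mid u$ and $p\nmid v$ for $x=u/v$, hence $v_p(x)\ge 1$ and $|x|_p\le p^{-1}<1$. Because $\binom{2N}{N}$ is an integer we have $\bigl|\binom{2N}{N}\bigr|_p\le 1$, and writing $\widetilde A(N,x)=\sum_m c_m(x)N^m$ with finitely many fixed rational coefficients $c_m(x)$, the ultrametric inequality together with $|N|_p\le 1$ gives $|\widetilde A(N,x)|_p\le \max_m |c_m(x)|_p=:C(x,k)$, a constant independent of $N$. Therefore
\begin{equation*}
\left|\binom{2N}{N}x^N\widetilde A(N,x)\right|_p\le C(x,k)\,|x|_p^{\,N}\to 0 \quad\text{as } N\to\infty .
\end{equation*}

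Finally, since the series \eqref{eq1} was already shown to converge p-adically on this domain, its sum equals the limit of its partial sums; combining this with the displayed estimate forces $\sum_{n=0}^{\infty}\binom{2n}{n}P_k(n,x)x^n=0$, which is \eqref{24}. The argument is essentially immediate once Theorem~\ref{thm1} is in hand, so the only real point requiring care — and the place I expect scrutiny — is the interchange of the limit with the summation, i.e. the justification that the limit of the partial sums is genuinely the p-adic sum of \eqref{eq1}. This rests entirely on the convergence already established in the preamble (that $|\binom{2n}{n}P_k(n,x)x^n|_p\to 0$ on the given set of rationals), together with the uniform bound $|\widetilde A(N,x)|_p\le C(x,k)$; I would verify that both hold for the specific $P_k$ of the theorem before invoking them.
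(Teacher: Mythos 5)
Your proposal is correct and follows essentially the same route as the paper: split $P_k$ by linearity, apply Theorem~\ref{thm1} to each piece so the $N$-th partial sum collapses to $\binom{2N}{N}x^N\widetilde A(N,x)$, and let $N\to\infty$ p-adically. In fact your justification of the limit is more careful than the paper's, which merely asserts that ``the term $A_{k-1}(N,x)$ vanishes'' --- the decay really comes from $|x|_p<1$ forcing $|x^N|_p\to 0$ while $\bigl|\binom{2N}{N}\bigr|_p\le 1$ and $|\widetilde A(N,x)|_p$ stays bounded, exactly as you argue.
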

\begin{proof}
Taking $N \to \infty$ in $\eqref{16}$, the term $A_{k-1}(N,x)$ vanishes with respect to p-adic absolute value and gives the sum of following p-adic infinite functional series 
\begin{equation} \label{26}
\sum_{n=0}^{\infty}  \binom{2n}{n} \left[n^{k}(4x-1)^k+U_k(x) \right]x^n=0.
\end{equation}
Now taking $P_k(n,x)=\sum_{j=1}^{k} B_j[n^{j}(4x-1)^j+U_j(x)]$, the relation $\eqref{24}$ is obtained, for all $x \in \mathbb{Q} \setminus \{ \frac{1}{4}\}$ and the constants $B_j \in \mathbb{Q}$. This completes the proof.

\end{proof}

The equation $\eqref{26}$ is valid for any $k \in \mathbb{N}$ and has same form for any $k \in \mathbb{N}$.  Indeed, the equality is independent of p-adic properties for $x \in \mathbb{Q} \setminus \left\{\frac{1}{4} \right\}$. In other words, it is a p-adic invariant expression. It gives rational sum $0$ for all rational arguments. The polynomials $U_k(x)$ and $A_{k-1}(N,x)$ plays plays important roles throughout the work. We can investigate more deeply about the properties of $U_k(x)$ and $A_{k-1}(N,x)$ in the above results.
The recurrence formulas $\eqref{eq5}$ and $\eqref{eq6}$ are the tools to compute the polynomials $U_k(x)$ and $A_{k-1}(n,x)$.  \\ The first few of $U_k(x)$ and $A_{k-1}(n,x)$ are given below: \\
\hspace*{0cm} For $k=1$,
\[
\hspace*{-5.7cm}\left. \begin{cases}
	U_1(x)&=2x \\ A_0(n,x)&=n.
\end{cases}
\right.
\]
\hspace*{0cm}
For $k=2$,
\[
\hspace*{-3cm}\left. \begin{cases}
U_2(x)&=-2x-4x^2 \\ A_1(n,x)&=(4n^2-4n)x-n^2.
\end{cases}
\right.
\] 
\hspace*{0cm} For $k=3$,
\[
\hspace*{3cm}\left. \begin{cases}
U_3(x)&=8x^3+20x^2+2x \\ A_2(n,x)&=(16n^3-40n^2+72n)x^2-(8n^3-10n^2+8n)x+n^3.
\end{cases}
\right.
\]
\hspace*{0cm} For $k=4$,
\[
\left. \begin{cases}
U_4(x)&=-16x^4-144x^3-60x^2-2x \\ A_3(n,x)&=(64n^4-224n^3+272n^2-880n)x^3 \\ &-(48n^4-112n^3-4n^2-120n)x^2 \\
&+(12n^4-14n^3-18n^2-10n)x-n^4.
\end{cases}
\right.
\]
\\
\hspace*{0cm} For $k=5$,
\[
\hspace*{2.8cm}\left. \begin{cases}
U_5(x)&=800x^5+352x^4+1176x^3+136x^2+2x \\ A_4(n,x)&=(256n^5-1152n^4+1984n^3-1568n^2+7648n)x^4 \\
& -(256n^5-864n^4+480n^3+1216n^2-3008n)x^3 \\
& (96n^5-216n^4-132n^3+308n^2-108n)x^2 \\
& -(16n^5-18n^4-32n^3-28n^2-12n)x+n^5.
\end{cases}
\right.
\] 
\\ 
\hspace*{0cm} For $k=6$,
\[
\hspace*{4.3cm}\left. \begin{cases}
U_6(x)&=-4032x^6-3616x^5-32992x^4-2244x^3-332x^2-2x \\ A_5(n,x)&=(1024n^6-5632n^5+12544n^4-384n^3-2630n^2-147686n)x^5 \\
& -(128n^6-5632n^5+6208n^4-15684n^3+8688n^2+28304n)x^4 \\
&+(640n^6-2112n^5-19552n^4+6408n^3-384n^2-2792n)x^3 \\
&-(160n^6-352n^5-404n^4+444n^3+2436n^2-280n)x^2 \\
&(20n^6-22n^5-50n^4-60n^3-40n^2-14n)x-n^6. 
\end{cases}
\right.
\]
\newpage
\section{\textbf{Application of the Invariant Summation Formula}}
In this section we will connect the summation formula $\eqref{26}$ with Bernoulli numbers and Bernoulli polynomials.
\subsection{Connection with Bernoulli Numbers} 
 Let $ C^1( \mathbb{Z}_p \to \mathbb{C}_p)$  be the set of all p-adic continuous differentiable functions i.e, $$C^1( \mathbb{Z}_p \to \mathbb{C}_p)= \{f| \ f: \mathbb{ Z}_p \to \mathbb{C}_p, \ f(x) \ \text{ is differentiable and} \ \frac{d}{dx}f(x) \ \text{ is continuous} \}.$$ Then the Volkenborn integral \cite{WHS1} or p-adic Bosonic integral of the function $f \in C^1 (\mathbb{Z}_p \to \mathbb{C}_p)$ is given by
$$ \int_{\mathbb{Z}_p} x^n dx=\lim_{n \to \infty} \frac{1}{p^n} \sum_{x=0}^{p^n-1} f(x). $$ The above integral has the following property 
\begin{equation}
\int_{\mathbb{Z}_p} x^ndx=B_n, \ n= \mathbb{N} \cup \{0\},
\end{equation}
where $B_n$ is the $n^{th}$ Bernoulli number. The Bernoulli numbers satisfy the following recurrence relation 
$$ \sum_{i=0}^{m} \binom{m+1}{i}B_i=0, \ B_0=1, \ m=1,2,3, \cdots$$
Now we show the connection of the invariant summation formula $\eqref{26}$ with Bernoulli numbers as follows: \\
For $k=1$
\begin{eqnarray} \label{3.2}
&& \nonumber \hspace*{-2cm} \sum_{n=0}^{\infty} \binom{2n}{n} \left[n(4x-1)+2x \right]x^n =0, \ x \in \mathbb{Z}_p,   \\
 && \hspace*{-2cm} \sum_{n=0}^{\infty} \binom{2n}{n} \left[(4n+2)B_{n+1}-nB_n \right] =0.
\end{eqnarray}
For $k=2$
\begin{eqnarray} \label{3.3}
 \nonumber
&& \hspace*{0.6cm} \sum_{n=0}^{\infty} \binom{2n}{n} \left[n^2(4x-1)^2-(2x+4x^2) \right]x^n =0, \ x \in \mathbb{Z}_p,  \\
&& \hspace*{0.6cm} \sum_{n=0}^{\infty} \binom{2n}{n} \left[(16n^2+12)B_{n+2}-(8n^2-2)B_{n+1}-n^2B_n \right] =0.
\end{eqnarray}
\newpage
\hspace*{-0.5cm} For $k=3$
\begin{equation*} 
\hspace*{-1cm} \begin{split}
  & \sum_{n=0}^{\infty} \binom{2n}{n} [ n^3(4x-1)^3+(8x^3+20x^2+2x)]x^n =0, x \in \mathbb{Z}_p, 
  \end{split}
  \end{equation*}
  \begin{equation} \label{3.4}
 \hspace*{0.8cm} \begin{split}
 &\sum_{n=0}^{\infty} \binom{2n}{n} 
 \bigl[(64n^3+8)B_{n+3}-(48n^3-20)B_{n+2}+(12n^3+2)B_{n+1}-n^3B_n \bigr] =0.
\end{split}
\end{equation}
For $k=4$,
\begin{equation*}
\hspace*{-1cm} \begin{split}
&\sum \binom{2n}{n} \bigl[n^4(4x-1)^4 +(-16x^4-144x^3-60x^2-2x)\bigr]=0,
\end{split}
\end{equation*}
\begin{equation} \label{3.5}
 \hspace*{0.4cm}\begin{split}
		&\sum \binom{2n}{n} \bigl[(256n^4-16) B_{n+4}-(256n^4+144)B_{n+3}+(96n^4-60)B_{n+2} \\ & -(16n^4+2)B_{n+1}+n^{4}B_n\bigr]=0.
	\end{split}
\end{equation}
For $k=5$,
\begin{equation*}
\hspace*{0.4cm}	\begin{split}
		&\sum \binom{2n}{n} \bigl[n^5(4x-1)^5 +(800x^5+352x^4+1176x^3+136x^2+2x)\bigr]=0,
	\end{split}
\end{equation*}
\begin{equation} \label{3.6}
\hspace*{0.6cm} \begin{split}
&\sum \binom{2n}{n} \bigl[(1024n^5+800)B_{n+5}-(1280n^5-352)B_{n+4}+(640n^5-1176)B_{n+3} \\ &-(160n^5+136)B_{n+2}
+(20n^5+2)B_{n+1}-n^5B_n\bigr]=0.
\end{split}
\end{equation}
For $k=6$,
 \begin{equation*}
\hspace*{1.8cm}	\begin{split}
		&\sum \binom{2n}{n} \bigl[n^6(4x-1)^6 +(-4032x^6-3616x^5-32992x^4-2244x^3-332x^2-2x)\bigr]=0,
	\end{split}
\end{equation*}
\begin{equation} \label{3.7}
\hspace*{1cm} \begin{split}
&\sum \binom{2n}{n} \bigl[(4096n^6-4032)B_{n+6}-(6144n^6+3616)B_{n+5}+(3840n^6-32992)B_{n+4} \\
&-(1280n^6+2244)B_{n+3}+(240n^6-332)B_{n+2}-(24n^6+2)B_{n+1}+n^6B_n \bigr]=0.
\end{split}
\end{equation}

The important fact to notice is that all the above series $ \eqref{3.2}, ~ \eqref{3.3}, ~ \eqref{3.4}, ~ \eqref{3.5}, ~ \eqref{3.6}, ~ \eqref{3.7}$ of the Bernoulli numbers are p-adic convergent because $|B_n|_p \leq p$, \cite{WHS1}. 

\subsection{Connection with Bernoulli Polynomials :} Bernoulli polynomials satisfy the following relation \begin{equation}
B_n(x+1)-B_n(x)=nx^{n-1}.
\end{equation} 
Then the summation formula $\eqref{26}$ produces the following relations with Bernoulli polynomials: \\
For $k=1$,
\begin{equation} \label{3.9}
\begin{split}
&\sum \binom{2n}{n} \Biggl[\frac{4n+2}{n+2}  \Bigl\{B_{n+2}(x+1)-B_{n+2}(x) \Bigr\}-\frac{n}{n+1} \Bigl\{B_{n+1}(x+1)-B_{n+1}(x) \Bigr\}\Biggr]=0,
\end{split}
\end{equation}
For $k=2$,
\begin{equation} \label{3.10}
\hspace*{-0.5cm} \begin{split}
& \sum \binom{2n}{n} \Biggl[\frac{16n^2-4}{n+3}  \Bigl\{B_{n+3}(x+1)-B_{n+3}(x) \Bigr\}-\frac{8n^2+2}{n+2}  \Bigl\{B_{n+2}(x+1)-B_{n+2}(x) \Bigr\} \\
 & + \frac{n^2}{n+1} \Bigl\{B_{n+1}(x+1)-B_{n+1}(x) \Bigr\} \Biggr]=0,
  \end{split}
\end{equation}
For $k=3$,
\begin{equation} \label{3.11}
\hspace*{0.1cm} \begin{split}
& \sum \binom{2n}{n} \Biggl[\frac{64n^3+8}{n+4}  \Bigl\{B_{n+4}(x+1)-B_{n+4}(x) \Bigr\}-\frac{48n^3-20}{n+3}  \Bigl\{B_{n+3}(x+1)-B_{n+3}(x) \Bigr\} \\
&+ \frac{12n^3+2}{n+2}  \Bigl\{B_{n+2}(x+1)-B_{n+2}(x) \Bigr\}-\frac{n^3}{n+1}  \Bigl\{B_{n+1}(x+1)-B_{n+1}(x) \Bigr\} \Biggr]=0,
\end{split}
\end{equation}

For $k>3$, there are similar relations involving Bernoulli polynomials. Putting $x=0$ in the relations $\eqref{3.9}$, $\eqref{3.10}$ and $\eqref{3.11}$, we will come back to the relations involving Bernoulli numbers. In other words, putting $x=0$ in the relations $\eqref{3.9}$, $\eqref{3.10}$ and $\eqref{3.11}$, we will get some p-adic convergent series involving Bernoulli numbers. 
\section{\textbf{Conclusion}}
The paper deals with the p-adic convergence of the constructed p-adic functional series with binomial coefficient. We have then deduced a summation formula of p-adic power series and shown that the summation formula is an invariant, i.e., for rational argument, the sum is rational and in fact the invariant sum is $0$. Then we make a direct application of the invariant summation formula with Bernoulli polynomials and Bernoulli numbers producing some relations involving Bernoulli numbers and Bernoulli polynomials.  

\textbf{Acknowledgement:} 
The second author is grateful to The Council Of Scientific and Industrial Research (CSIR), Government of India, for the award of JRF (Junior Research Fellowship).

\end{document}